\newtheorem{thm}{Theorem}[section]
\newtheorem{lemma}[thm]{Lemma}
\theoremstyle{remark}
\newtheorem{rem}[thm]{Remark}
\theoremstyle{definition}
\newtheoremstyle{Claim}{}{}{\itshape}{}{\itshape\bfseries}{:}{ }{#1}
\theoremstyle{Claim}
\newcommand{\R}{\mathbb{R}}
\newcommand{\X}{\mathcal{X}}
\newcommand{\eps}{\varepsilon}
\newcommand{\G}{\mathbb{G}}
\theoremstyle{plain}
\def\sideremark#1{\ifvmode\leavevmode\fi\vadjust{
\vbox to0pt{\hbox to 0pt{\hskip\hsize\hskip1em
\vbox{\hsize3cm\tiny\raggedright\pretolerance10000
\noindent #1\hfill}\hss}\vbox to8pt{\vfil}\vss}}}
\begin{document}

\title[A priori estimates for fully nonlinear subelliptic equations]{Interior a priori estimates for supersolutions of fully nonlinear subelliptic equations under geometric conditions}

\author{Alessandro Goffi}
\address{Dipartimento di Matematica ``Tullio Levi-Civita'', Universit\`a degli Studi di Padova, 
via Trieste 63, 35121 Padova (Italy)}
\curraddr{}
\email{alessandro.goffi@unipd.it}

\subjclass[2020]{35D40, 35B65, 35H20.}
\keywords{}
 \thanks{
 The author is member of the Gruppo Nazionale per l'Analisi Matematica, la Probabilit\`a e le loro Applicazioni (GNAMPA) of the Istituto Nazionale di Alta Matematica (INdAM). The author was partially supported by the INdAM-GNAMPA Project 2022 ``Propriet\`a quantitative e qualitative per EDP non lineari con termini di gradiente'' and by the King Abdullah University of Science and Technology (KAUST) project CRG2021-4674 ``Mean-Field Games: models, theory and computational aspects".}

\date{\today}

\date{\today}

\begin{abstract}
In this note, we prove interior a priori first- and second-order estimates for solutions of fully nonlinear degenerate elliptic inequalities structured over the vector fields of Carnot groups, under the main assumption that $u$ is semiconvex along the fields. These estimates for supersolutions are new even for linear subelliptic inequalities in nondivergence form, whereas in the nonlinear setting they do not require neither convexity nor concavity on the second derivatives. We complement the analysis exhibiting an explicit example showing that horizontal $W^{2,q}$ regularity of Calder\'on-Zygmund type for fully nonlinear subelliptic equations posed on the Heisenberg group cannot be in general expected in the range $q<Q$, $Q$ being the homogeneous dimension of the group.
\end{abstract}

\maketitle

\section{Introduction}
Recently, E. Braga and D. Moreira obtained an optimal $C^{1,\alpha}$ regularity result for semiconvex supersolutions to fully nonlinear uniformly elliptic equations with an unbounded source term $f\in L^q$, $q>n$, $n$ being the dimension of the ambient space, see Theorem 3.6 in \cite{BM}. This extended a previous a priori estimate on the modulus of continuity of the gradient for semiconvex supersolutions to linear uniformly elliptic equations proved by L. Caffarelli, R. Kohn, L. Nirenberg and J. Spruck \cite{CKNS}, cf. Corollary  of Lemma 2.2. An improvement of such a result up to $q=n$, along with the optimal interior regularity of the convex envelope of $L^n$-viscosity supersolutions to Pucci's extremal equations with unbounded coefficients, were investigated by E. Braga, A. Figalli and D. Moreira in \cite{BFM}.\\
Given a family $\X=\{X_1,...,X_m\}$ of $C^2$ vector fields of Carnot-type, the purpose of this note, inspired by the aforementioned works, is to establish a priori high-order estimates in second order Sobolev spaces (and in first-order H\"older spaces via Morrey-type embeddings as a byproduct) for solutions to degenerate fully nonlinear subelliptic inequalities of the form
\begin{equation}\label{eqintro}
G(x,(D^2_\X u)^*)\leq f(x)\text{ in }\Omega,
\end{equation}
where $(D^2_\X u)^*$ stands for the symmetrized horizontal Hessian along the vector fields $(D^2_\X u)^*=\frac{X_iX_ju+X_jX_iu}{2}$. Here, $G:\Omega\times\mathcal{S}_m\to\R$, $\Omega\subseteq\R^n$, $n\geq m$, $\mathcal{S}_m$ being the space of $m\times m$ symmetric matrices, is continuous and uniformly subelliptic in the sense that
\begin{equation}\label{uellintro}
\mathcal{M}^-_{\lambda,\Lambda}(X-Y)\leq G(x,X)-G(x,Y)\leq \mathcal{M}^+_{\lambda,\Lambda}(X-Y), X,Y\in \mathcal{S}_m,
\end{equation}
$\mathcal{M}^-_{\lambda,\Lambda},\mathcal{M}^+_{\lambda,\Lambda}$ being the Pucci's extremal operators, see \eqref{pucci+}-\eqref{pucci-} below. A longstanding open problem in the regularity theory of such equations or, more generally, subelliptic equations in nondivergence form with bounded measurable coefficients, is the validity of an analogue of the Krylov-Safonov Harnack inequality. This is mainly due to the lack of Aleksandrov-Bakel'man-Pucci (briefly ABP) maximum principles, see the introduction of \cite{DGN} for a thorough discussion. In this framework, it was only shown that ABP type estimates cannot be produced, at least in Heisenberg-type groups, when the right-hand side of the equation lies in $L^q$, $q<Q$, $Q$ being the homogeneous dimension \cite{DGN}, in accordance with the classical results of A. D. Aleksandrov and C. Pucci \cite{A,Pucci66}. Nonetheless, some invariant Harnack inequalities have been obtained under smallness conditions on the ratio among the ellipticity constants (this can be referred to Landis/Cordes-type conditions) for nondivergence structure elliptic problems posed on Carnot groups of Heisenberg-type, see \cite{GT,Tralli,AGT}, and also for parabolic Kolmogorov and kinetic operators with diffusion in nondivergence form, cf. \cite{AT}. For weighted ABP estimates and Harnack inequalities on Grushin geometries see \cite{M}. We also mention that a global Krylov-Safonov Harnack inequality was proved in \cite{C} for equations in nondivergence form on Riemannian manifolds under curvature lower-bounds. Note that the results in \cite{Tralli,AGT} immediately lead to an extension to the fully nonlinear setting, and also provide low-regularity in H\"older spaces using classical arguments, under suitable restrictions on the ellipticity constants. Still, a subelliptic version of the $C^{1,\alpha}$ estimate of Cordes-Nirenberg type seems not available, and to our knowledge no results appeared in the context of degenerate fully nonlinear subelliptic equations of second order in horizontal $C^{2,\alpha}_\X$ and $W^{2,q}_\X$ spaces, even for smooth solutions and smooth functionals. Indeed, the structure of the problem over horizontal Hessians prevents from the use of classical linearization arguments, based for example on the Bernstein method, see e.g. \cite{CC,CKNS,TruTAMS,RRO}. Typically, when $\X$ are the Euclidean vector fields, see \cite{RRO}, if $u$ smooth solves the model problem $F(D^2u)=0$, with $F\in C^1$ and uniformly elliptic in the sense of \cite{CC}, then $v=\partial_e u$, $e\in\R^n$ with $|e|=1$, solves a linear nondivergence structure equation of the form
\[
a_{ij}(x)\partial_{ij}v=0\ ,a_{ij}(x)=F_{ij}(D^2u).
\]
Then, the Krylov-Safonov H\"older regularity applies to $v$ (since no regularity properties on $a_{ij}$ are required) leading thus to $C^{1,\alpha}$ estimates under no assumptions on the nonlinearity other than the uniform ellipticity.\\
However, the non-commutative structure of sub-Riemannian geometries and the absence of the Krylov-Safonov theory do not allow to reproduce the previous approach. Similarly, the Bernstein method, based on a linearization argument, cannot be performed: this is easily seen by the recent Bochner-type formulas obtained in \cite{Gproc} for step-2 Carnot groups, which give rise to additional commutator terms.\\
In the classical regularity theory for fully nonlinear equations, though $C^{1,\alpha}$, $\alpha<1$, estimates are obtained under essentially no assumptions on $F=F(D^2u)$, $C^{1,1}$ estimates (and higher-order $C^{2,\alpha}$ estimates through the Evans-Krylov theorem) usually require concavity/convexity type assumptions on $F$. However, $C^{1,1}$ estimates were proved in \cite{Imbert} for strictly elliptic inequalities under the geometric assumption that the unknown function is convex: this was obtained through the properties of the convex envelope found in \cite{ALL}, without requiring neither convexity nor concavity on the fully nonlinear operator. The aforementioned works \cite{BM,BFM} lead to a further development, proving indeed a generalization of the results in \cite{CKNS,Imbert} for more general semiconvex semisolutions with unbounded coefficients in Lebesgue spaces. \\
In the subelliptic context, a generalized notion of convexity (and, thus, semiconvexity) along the vector fields, called $\X$-convexity, was discussed in \cite{BD}, see also \cite{DGNcag} for Carnot groups. The results in \cite{BD} say that $u\in\mathrm{USC}$ is convex along the fields of $\X$ if and only if $(D^2_\X \varphi(x))^*\geq0$ for all smooth $\varphi$ and $x\in\mathrm{argmax}(u-\varphi)$. \\
Therefore, having such a notion at our disposal and following \cite{BM}, we prove the following model result for horizontal Hessian inequalities structured over the fields of a Carnot group: if $u$ is $\X$-semiconvex according to \cite{BD} and a smooth solution to 
\[
G((D^2_\X u)^*)\leq f\text{ in }B_1
\]
with $f\in L^\infty(B_1)$, $G$ satisfying \eqref{uellintro} with $G(0)=0$ (with no other assumptions on $G=G(X)$), where $B_r$ is a metric ball of radius $r$, then the following interior a priori estimate holds:
\[
\|(D^2_\X u)^*\|_{L^q(B_\frac12)}\lesssim C(\|u\|_{L^q(B_1)}, \|f\|_{L^q(B_1)}),\ q\in(1,\infty).
\]
When $q$ is larger than a certain threshold involving the homogeneous dimension associated to the fields we get $C^{1,\alpha}$ estimates with respect to the corresponding sub-Riemannian distance. As stated in the previous results, throughout the paper we work with classical solutions, though all the estimates are independent of the smoothness and depend only on the structural constants and the integrability of the data. We briefly discuss some one-side second order estimates, that can be thus regarded as regularity estimates, in the viscosity sense, see Remark \ref{visc}.\\
 The prototype example to which our results apply is the (degenerate) Isaacs equation, i.e. a PDE of the form \[G((D^2_\X u)^*)=\sup_\beta\inf_\alpha \mathrm{Tr}(A_{\alpha\beta}(x)\ (D^2_\X u)^*)=\sup_\beta\inf_\alpha \mathrm{Tr}(A_{\alpha\beta}(x)\sigma^T(x)D^2u\sigma(x)),\] where $\sigma\in\R^{n\times m}$ is a (degenerate) matrix having the fields $X_i$ as columns, $\alpha,\beta$ belong to some control sets $A,B$ respectively, while $D^2u\in \mathcal{S}_n$ is the standard Hessian of the unknown function $u$. It is well-known that every nonlinear operator $G=G((D^2_\X u)^*)$ satisfying \eqref{uellintro} can be written in Isaacs form as follows, cf. \cite{CCjmpa}: by \eqref{uellintro} one has for $X,Y\in\mathcal{S}_m$ and $\mathcal{A}_{\lambda,\Lambda}=\{A\in\mathcal{S}_m,\lambda I_m\leq A\leq\Lambda I_m\}$,
\[
G(X)-G(Y)\leq \mathcal{M}^+_{\lambda,\Lambda}(X-Y)=\sup_{A\in\mathcal{A}_{\lambda,\Lambda}}\mathrm{Tr}(A(X-Y)).
\]
Since the equality is attained when $X=Y$, one obtains
\[
G(X)=\min_{Y\in\mathcal{S}_m}\max_{A\in\mathcal{A}_{\lambda,\Lambda}}\{ \mathrm{Tr}(AX)+G(Y)-\mathrm{Tr}(AY)\}.
\]
By taking then $X=(D^2_\X u)^*$ we have
\[
G((D^2_\X u)^*)=\min_{Y\in\mathcal{S}_m}\max_{A\in\mathcal{A}_{\lambda,\Lambda}}\{\mathrm{Tr}(A(D^2_\X u)^*)+G(Y)-\mathrm{Tr}(AY)\}.
\]
Note that the previous operator is degenerate elliptic if expressed in Euclidean coordinates. However, the previous announced results turn out to be new even for general linear subelliptic equations of the form $G((D^2_\X u)^*)=\mathrm{Tr}(A(x)D^2_\X u)^*)$, where $A$ satisfies $\lambda I_m\leq A\leq \Lambda I_m$ and has bounded measurable entries, and, finally, apply to any Carnot group.\\

Some comments on the results are now in order. Though the (geometric) a priori requirement on the solution could appear strong (and thus the consequent estimate conditional to the geometric bound), the procedure we are going to implement allows to conclude the estimate for semisolutions, and not only for solutions. Indeed, $W^{2,q}$ estimates are typically obtained for solutions to uniformly elliptic PDEs (which indicates a two-side analytic control on the problem, since, roughly speaking, $G=f$ with $|f|\leq C$ means $-C\leq G\leq C$). Indeed, classical results for the Poisson equation show that if $u$ solves $\Delta u=f\in L^\infty$, i.e. $-C_f\leq \Delta u\leq C_f$ for some $C_f>0$, then $u\in C^{1,\alpha}\cap W^{2,q}$ and the following estimates hold
\[
\|u\|_{W^{2,q}(B_\frac12)}\leq C(n)(\|u\|_{L^\infty(B_1)}+C_f),
\]
\[
\|u\|_{C^{1,\alpha}(B_\frac12)}\leq C(n)(\|u\|_{L^\infty(B_1)}+C_f).
\]
Motivated by these results, one can ask whether it is possible to expect some (high) level of regularity of $u$ when one has only an analytic control from above, i.e. $\Delta u\leq C$, based on the regularity properties of the right-hand side and a geometric control from below involving second derivatives, such as a convexity-type assumption. \\
Besides, the analysis of the regularity properties of semisolutions to fully nonlinear degenerate equations satisfying geometric conditions like convexity is motivated by the understanding of a regularity theory for such degenerate PDEs. Indeed, a crucial step in the proof of the ABP maximum principle for equations over Euclidean vector fields in \cite{C91} is to show that the convex envelope of supersolutions to $\mathcal{M}^-_{\lambda,\Lambda}(D^2u)\leq f$ is $C^{1,1}_{\mathrm{loc}}$ when $f\in L^\infty$, cf. Chapter 3 of \cite{CC}. Thus, a tightly related question is the study of the regularity of supersolutions satisfying convexity constraints and linear or nonlinear partial differential inequalities in nondivergence form. In this direction, a study of the horizontal convex envelope in the Heisenberg group, along with its application to the study of horizontal convexity properties of solutions to fully nonlinear equations was performed in \cite{LiuZhou}. This analysis has its roots in the earlier work by O. Alvarez, J.-M. Lasry and P.-L. Lions \cite{ALL}. Based on this study, in the course of Section \ref{sec;conv} we provide some sufficient conditions to obtain convexity estimates on solutions, providing an instance of the full result unconditional to the geometric bound. Another paper dealing with such convexity preserving properties for nondivergence structure uniformly parabolic PDEs is \cite{LM}. \\
Other than the previous motivations, this seems the first (a priori) regularity result concerning fully nonlinear subelliptic equations in higher-order spaces. Few results are available in the literature of nondivergence subelliptic equations: the works \cite{Tralli,AGT} allow to deduce a priori estimates at the level of $C^{\alpha}$ spaces for subelliptic Hessian equations with bounded right-hand side. Indeed, in the smooth setting one can write
\[
G((D^2_\X u)^*)=\int_0^1 G_{ij}(t(D^2_\X u)^*))\,dt\ (D^2_\X u)_{ij}^*+G(0)=\mathrm{Tr}(A(x)(D^2_\X u)^*)+G(0),
\]
so
\[
G((D^2_\X u)^*)=0\iff \mathrm{Tr}(A(x)(D^2_\X u)^*)=-G(0),
\]
and apply e.g. the results from \cite{Tralli,AGT} valid when the ellipticity constants are sufficiently close to each other.\\
We also prove, as a further step towards a possible development of a regularity theory for fully nonlinear subelliptic equations, that $W^{2,q}_\X$ estimates for the uniformly subelliptic equation
\[
G((D^2_\X u)^*)= f\in L^q
\]
fail when $q<Q$, at least in the Heisenberg group, $Q$ being its homogeneous dimension. This shows that the best possible integrability order to get such estimates is $q=Q$.  This is done adapting a counterexample proposed by L. Caffarelli in \cite{C91}. We emphasize that in this sub-Riemannian context there is a dimensional discrepancy between the dimension of the horizontal layer, say $m$, and the homogeneous dimension $Q$ (and even the topological dimension $n$), so it is unclear the right integrability order to deduce such an estimate. This shares some similarities with a result obtained in \cite{DGN} for linear subelliptic equations in nondivergence form with bounded measurable coefficients in H-type groups, concerning the validity of the uniqueness of solutions in the Sobolev space $W^{2,Q}_\X$. The example in \cite{DGN} also implies that ABP-type estimates cannot hold when the right-hand of the equation does not belong to $L^q$, $q\geq Q$. 
\par\medskip
The paper is organized as follows. Section \ref{sec;prel} gives some preliminaries on sub-Riemannian geometries and convexity along vector fields. Section \ref{sec;ell} is devoted to the proof of the a priori estimate for elliptic inequalities. Section \ref{sec;count} ends the paper with the failure of second order estimates in $L^q$ spaces when $q<Q$.

%
\section{Preliminaries on subelliptic structures}\label{sec;prel}
We denote by $\X=\{X_1(x),...,X_m(x)\}$ a family of $C^2$-vector fields in $\R^n$ and $\Omega\subset\R^n$ an open and connected set. We recall that the Carnot-Carath\'eodory (briefly CC), or sub-Riemannian distance $d$ on $\R^n$ associated to the vector fields of the family $\X$, is the length of the shortest horizontal curves connecting two-points, and is denoted by $d$. The pair $(\R^n,d)$ is a sub-Riemannian geometry if $d(x,y)<+\infty$ for all $x,y\in\R^n$ and the vector fields $\X$ are the generators. Important examples of such spaces are Carnot groups.\\

A stratified group (or Carnot group)
 \cite[Definition 2.2.3]{BLU} is a 
 connected and simply connected Lie 
group whose Lie algebra 
$\mathcal{G}$ 
admits a stratification 
$\mathcal{G}=\oplus_{i=1}^r V_i$, where the layers $V_i$ satisfy the relations $[V_i,V_{i-1}]=V_i$ for 
$2\leq i\leq r$ and $[V_1,V_r]=0$, with 
$[V,W]:=\mathrm{span}\{[v,w]: v\in V, w\in W\}$.
Here, $r$ is called the step of the group, 
and we set $m=n_1=\mathrm{dim}(V_1)$, which stands for the dimension of the horizontal layer, 
$n_i=\mathrm{dim}(V_i)$, 
$2\leq i\leq r$. A stratified group can 
be identified with a homogeneous 
Carnot group up to an isomorphism 
 by means of \cite[Section 2.2.3]{BLU}. 
A  
 homogeneous Carnot group
$\mathbb{G}$ 
(see e.g. \cite[Definition 1.4.1]{BLU}) 
can be identified with 
$\R^n=\R^{n_1}\times...\times\R^{n_r}$ 
($n=\sum_{i=1}^rn_i$) endowed with a group law 
$\star$
 if for  any 
$\lambda>0$ the dilation 
$\delta_\lambda:\R^n\to\R^n$ of the form 
$\delta_\lambda(x)=(\lambda x^{(1)},...,{\lambda^{r}}x^{(r)})$
is an automorphism of the group, where $x=(x^{(1)},....,x^{(r)})$, 
$x^{(i)}\in\R^{n_i}$.
Then the vector fields in
$\R^n$ of the family $\X=\{X_1,...,X_m\}$ generate the 
homogeneous Carnot group $(\R^n,\star,\delta_\lambda)$
if they are left-invariant on $\G$ and such that 
$X_j(0)=\partial_{x_j}|_0$ for $j=1,...,n_1$ span 
$\R^n$ at every point $x\in\R^n$.
In this case we say that $\G$ has step $r$ and $m=n_1$ generators. We also denote by 
$\Delta_{\X}=\sum_{i=1}^mX_i^2$ the operator
sum of squares of vector fields, usually known as
sub-Laplacian on the Carnot group $\mathbb{G}$. We also denote with 
$$
Q:=\sum_{i=1}^rin_i{ =\sum_{i=1}^ri \mathrm{dim}(V_i)}
$$ 
the homogeneous dimension of the group, with 
$$
D_{\X}u:= (X_1u,...,X_mu)
$$ 
the horizontal gradient along the vector fields, and with 
$D^2_{\X}u=X_iX_ju$, $i,j=1,...,m$, the 
horizontal Hessian built over the frame $\X$. Finally, $(D^2_{\X}u)^*=\frac{X_iX_ju+X_jX_iu}{2}\in\mathcal{S}_m$ is the symmetrized horizontal Hessian.\\ We point out that it was proved in \cite{BLU} that the fields of a Carnot group can be written as
\[
X_j=\partial_{x_j}+\sum_{i=m+1}^{n}b_{ij}(x)\partial_{x_i}, j=1,....,m.
\]
where $b_{ij}(x)=\sigma_{ij}(x_1,...,x_{i-1})$ are homogeneous polynomials of degree less than or equal to $n-m$. A model example that we will analyze in Section \ref{sec;count} is the Heisenberg group in $\mathbb{H}^d\simeq\R^{2d+1}$, $n=2d+1$, $m=2d$, $Q=2d+2$, where the fields of the horizontal layer are
\[
X_i=\partial_{x_i}+2x_{i+d}\partial_{2d+1}\ ,X_{i+d}=\partial_{x_i}-2x_{i}\partial_{2d+1},\ i=1,...,d.
\]
We denote by $B_r(x)=\{y\in\mathbb{G}:d(x,y)<r\}$ the metric ball of radius $r$. \\
We recall that for $u$ smooth, the convexity of $u$ along the fields of the family $\X$ is equivalent to the positive semidefiniteness of the symmetrized horizontal Hessian $(D^2_\X u)^*$. This has been extended to the viscosity setting in \cite{BD} to which we refer for further details. 
\section{Interior a priori estimates for elliptic inequalities under $\X$-semiconvexity conditions}\label{sec;ell}
\subsection{High-order local estimates under geometric conditions}
Let $\X$ be a family of vector fields of Carnot-type, and let $X_1,...,X_m$ be the generators of the group, where $m$ is the dimension of the horizontal layer. We assume that $G:\Omega\times\R\times\R^m\times\mathcal{S}_m\to\R$, $\Omega\subseteq\R^n$, $n\geq m$, is continuous and uniformly subelliptic, i.e.
\begin{equation}\label{uell}
\mathcal{M}^-_{\lambda,\Lambda}(X-Y)\leq G(x,r,p,X)-G(x,r,p,Y)\leq \mathcal{M}^+_{\lambda,\Lambda}(X-Y)
\end{equation}
for $X,Y\in\mathcal{S}_m$ and $(x,r,p)\in \Omega\times\R\times\R^m$, where
\begin{equation}\label{pucci+}
\mathcal{M}^+_{\lambda,\Lambda}(M)=\Lambda\sum_{e_k>0}e_k+\lambda \sum_{e_k<0}e_k=\sup\{\mathrm{Tr}(AM),\lambda I_m\leq A\leq AI_m\}
\end{equation}
\begin{equation}\label{pucci-}
\mathcal{M}^-_{\lambda,\Lambda}(M)=\lambda \sum_{e_k>0}e_k+\Lambda\sum_{e_k<0}e_k=\inf\{\mathrm{Tr}(AM),\lambda I_m\leq A\leq AI_m\}
\end{equation}
and $e_k$ denotes the $k$-th eigenvalue of $M\in\mathcal{S}_m$.\\
We prove the following interior a priori estimate in the horizontal spaces $W^{2,q}_\X(\Omega)=\{u\in L^q(\Omega):X_Iu\in L^q(\Omega)\text{ for any }|I|\leq 2\}$ and $C^{1,\alpha}_\X$, the classical first-order H\"older space with respect to the sub-Riemannian distance. Note that even if $u\in C^2$ and $f$ is bounded, bounds will depend only on the summability of the data, so this can be regarded as an a priori estimate under the one-side geometric condition that $u$ is semiconvex along the fields of the family $\X$.
\begin{thm}\label{main1} 
Let $u\in C^2(B_1)\cap L^q(B_1)$, $q\in(1,\infty)$, be a classical and $\X$-semiconvex solution (with constant $4C$) to the inequality
\begin{equation}\label{eqmain1}
G(x,u,D_\X u,(D^2_\X u)^*)\leq f\text{ in }B_1
\end{equation}
with $f\in L^\infty(B_1)$, $G$ satisfying \eqref{uell} with $G(x,u,D_\X u,0)$ bounded (independently of $u$), i.e. \[|G(x,u,D_\X u,0)|\leq M,\] with $M$ independent of $u$. Then, there exist a constant $\tilde C$ depending on $C,m,\lambda,\Lambda,M,q$ and a universal constant $K_1=K_1(m,q,\lambda,\Lambda)$  such that
\[
\|u\|_{W^{2,q}_\X(B_\frac12)}\leq K_1(\tilde C+\|u\|_{L^q(B_1)}+\|f\|_{L^q(B_1)}).
\]
If, in addition, $q>Q$ and $u\in L^\infty(B_1)$, we have the estimate
\[
\|u\|_{C^{1,1-\frac{Q}{q}}_\X(B_\frac12)}\leq K_2(\tilde C+\|u\|_{L^\infty(B_1)}+\|f\|_{L^q(B_1)}).
\]
Here, $K_1$ and $K_2$ are universal constants depending on $m,q,\lambda,\Lambda$, while $K_2$ depends also on the constant of the corresponding Sobolev embedding.
\end{thm}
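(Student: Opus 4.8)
The plan is to exploit the key observation, going back to Caffarelli--Kohn--Nirenberg--Spruck and Braga--Moreira, that $\X$-semiconvexity turns the one-sided inequality \eqref{eqmain1} into a two-sided control on the symmetrized horizontal Hessian. Indeed, by Definition \ref{def2} the hypothesis that $u$ is $\X$-semiconvex with constant $4C$ means $(D^2_\X u)^*\geq -4C\,I_m$ in the viscosity sense, hence (since $u\in C^2$) pointwise. Combining this lower bound on the eigenvalues with the upper Pucci bound coming from \eqref{uell} and the boundedness of $G(x,u,D_\X u,0)$, I would show that $(D^2_\X u)^*$ satisfies a pointwise inequality of the form $\mathcal{M}^+_{\lambda,\Lambda}((D^2_\X u)^*)\geq -\tilde C - |f|$ together with $(D^2_\X u)^*\geq -4C I_m$; elementary linear algebra on symmetric matrices (bounding the positive eigenvalues in terms of $\mathcal M^+$ and the trace of the negative part, which is controlled by the semiconvexity constant) then yields a genuine two-sided pointwise bound $|(D^2_\X u)^*|\leq \tilde C'(1+|f(x)|)$ for a.e.\ $x$, with $\tilde C'$ depending only on $C,m,\lambda,\Lambda,\|G(\cdot,u,D_\X u,0)\|_\infty$. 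In particular $(D^2_\X u)^*\in L^q(B_1)$ with norm controlled by $\tilde C+\|f\|_{L^q(B_1)}$.

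The second step is to pass from the symmetrized horizontal Hessian $(D^2_\X u)^*$ to the full (non-symmetrized) collection $X_iX_j u$, $|I|\le 2$. The antisymmetric part $\tfrac12(X_iX_ju - X_jX_iu) = \tfrac12[X_i,X_j]u$ is a first-order operator in the higher-layer directions; using the explicit triangular form $X_j=\partial_{x_j}+\sum_{i>m}b_{ij}(x)\partial_{x_i}$ recalled in Section \ref{sec;prel}, the commutators $[X_i,X_j]u$ can be expressed through derivatives of $u$ of "weight" at most two, and one interpolates these against $(D^2_\X u)^*$ and $u$ itself. Concretely, I would invoke the subelliptic Calder\'on--Zygmund / a priori estimates for the sub-Laplacian $\Delta_\X$ (Folland--Stein theory on Carnot groups): writing $\Delta_\X u = \mathrm{Tr}\,(D^2_\X u)^* \in L^q(B_1)$, the estimate $\|X_Iu\|_{L^q(B_{3/4})}\lesssim \|\Delta_\X u\|_{L^q(B_1)}+\|u\|_{L^q(B_1)}$ for $|I|\le 2$, valid for $q\in(1,\infty)$, gives exactly the desired $W^{2,q}_\X$ bound on a slightly smaller ball, and a covering/scaling argument brings it down to $B_{1/2}$. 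This produces $K_1$ as a universal constant depending only on $m,q,\lambda,\Lambda$.

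For the Hölder statement, once $u\in W^{2,q}_\X(B_{1/2})$ with $q>Q$, I would simply apply the subelliptic Morrey embedding $W^{2,q}_\X \hookrightarrow C^{1,1-Q/q}_\X$ with respect to the CC distance (the homogeneous dimension $Q$ playing the role of the Euclidean dimension), which holds on Carnot groups; this gives the last inequality with $K_2$ depending in addition on the embedding constant, after absorbing $\|u\|_{L^q(B_1)}\le C\|u\|_{L^\infty(B_1)}$.

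The main obstacle is the first step: extracting a clean \emph{two-sided} pointwise bound on $(D^2_\X u)^*$ from the one-sided viscosity inequality plus semiconvexity must be done carefully so that the constant is truly structural (independent of the $C^2$ norm of $u$), and one must make sure the lower bound $(D^2_\X u)^*\ge -4CI_m$ genuinely upgrades from the viscosity definition to a pointwise inequality for the classical solution — this is where Proposition \ref{smoothstat} and the equivalence of the convexity notions (Propositions \ref{prop1} and its semiconvex analogue) are essential. A secondary technical point is that the Folland--Stein estimate is naturally stated for $\Delta_\X$, so one has to be slightly attentive that controlling only the \emph{trace} of $(D^2_\X u)^*$ suffices to recover all second-order horizontal derivatives; this is indeed the content of subelliptic Calder\'on--Zygmund theory and requires $1<q<\infty$ but nothing more.
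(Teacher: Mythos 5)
Your proposal is sound and in the same spirit as the paper's argument: both hinge on the observation that $\X$-semiconvexity supplies the missing lower bound on $(D^2_\X u)^*$, so that the one-sided inequality \eqref{eqmain1} together with \eqref{uell} yields a two-sided pointwise control on the horizontal Hessian, after which everything reduces to linear theory for $\Delta_\X$ plus the horizontal Morrey embedding. Two remarks. First, the Pucci inequality you display in step one, $\mathcal{M}^+_{\lambda,\Lambda}((D^2_\X u)^*)\geq -\tilde C-|f|$, is stated in the wrong direction and, even if it held, a lower bound on $\mathcal{M}^+$ does not control the positive eigenvalues. What actually follows from \eqref{eqmain1} and the lower bound in \eqref{uell} is $\mathcal{M}^-_{\lambda,\Lambda}((D^2_\X u)^*)\leq f-G(x,u,D_\X u,0)$, i.e. $\lambda\sum_{e_k>0}e_k\leq f+|G(x,u,D_\X u,0)|-\Lambda\sum_{e_k<0}e_k\leq f+|G|+4Cm\Lambda$, which combined with $e_k\geq-4C$ gives exactly the two-sided bound you want; your parenthetical description of the mechanism is the correct one, only the displayed inequality needs fixing. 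The paper packages the same computation by passing to the $\X$-convex function $v=u+2C\sum_i x_i^2$ and using superadditivity of $\mathcal{M}^-$, landing directly on the trace bound $-4Cm\leq\Delta_\X u\leq\lambda^{-1}(f+4mC(\Lambda-\lambda)+|G|)$. Second, for the upgrade to $W^{2,q}_\X$ the paper avoids the full Folland--Stein Calder\'on--Zygmund machinery you invoke: it controls $\|D_\X u\|_{L^q}$ by an interpolation inequality between $u$ and $\Delta_\X u$ (its \eqref{interp}), and the entries of the symmetrized Hessian pointwise via $|(D^2_\X u)^*_{ij}|\leq L(\Delta_\X u+4Cm)$, which is elementary since $(D^2_\X u)^*+4CI_m\geq0$. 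Your CZ route is heavier machinery but buys control of the non-symmetrized derivatives $X_iX_ju$ (hence the commutator terms), which the paper's pointwise matrix inequality does not literally reach; in either version the constants are structural, as required.
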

\begin{proof}
We follow \cite[Remark 1.2]{BM}. Since $u$ is smooth and $\X$-semiconvex with constant $4C$, then $v=u+2C\sum_{i=1}^mx_i^2=u+z$ is $\X$-convex. Therefore, recalling that, cf. \cite{CC},
\begin{equation}\label{pucci1}
\mathcal{M}^-_{\lambda,\Lambda}(X+Y)\geq \mathcal{M}^-_{\lambda,\Lambda}(X)+\mathcal{M}^-_{\lambda,\Lambda}(Y)
\end{equation}
and
\begin{equation}\label{pucci2}
\mathcal{M}^-_{\lambda,\Lambda}(-X)=-\mathcal{M}^+_{\lambda,\Lambda}(X)
\end{equation}
we have, using first \eqref{uell} with $r=v-z, p=D_\X(v-z),X=(D^2_\X (v-z))^*$ and $Y=0$, then \eqref{pucci1}-\eqref{pucci2} and also that $v$ is $\X$-convex
\begin{align*}
f\geq G(x,v-z,D_\X(v-z),(D^2_\X (v-z))^*)&\geq \mathcal{M}^-_{\lambda,\Lambda}((D^2_\X (v-z))^*)+G(x,v-z,D_\X(v-z),0)\\
&\geq \mathcal{M}^-_{\lambda,\Lambda}((D^2_\X v)^*)+\mathcal{M}^-_{\lambda,\Lambda}((D^2_\X (-z))^*)+G(x,u,D_\X u,0)\\
&=\mathcal{M}^-_{\lambda,\Lambda}((D^2_\X v)^*)-\mathcal{M}^+_{\lambda,\Lambda}((D^2_\X z)^*)+G(x,u,D_\X u,0)\\
&\geq \lambda \mathrm{Tr}((D^2_\X v)^*)-4mC\Lambda+G(x,u,D_\X u,0)\\
&=\lambda\left(\sum_{i=1}^mX_i^2 u+4mC\right) -4mC\Lambda+G(x,u,D_\X u,0)\\
&\geq \lambda\sum_{i=1}^mX_i^2 u-4mC(\Lambda-\lambda)-M.
\end{align*}
Therefore, the last estimate together with the $\X$-semiconvexity give the pointwise bound
\begin{equation}\label{point}
-4Cm\leq \sum_{i=1}^mX_i^2 u(x)\leq \lambda^{-1}(f(x)+4mC(\Lambda-\lambda)+M), x\in B_1.
\end{equation}
Therefore, using that $u$ is $\X$-semiconvex, we get for a constant $K=K(m)>0$ and $i,j=1,...,m$
\begin{equation}\label{mixed}
|X_iX_ju|\leq \sum_{i,j=1}^m|X_iX_ju|\leq K(\Delta_\X u+4Cm).
\end{equation}
Indeed, denoting by $\|X\|_{\mathrm{spec}}=\max\{e_i(X)\}$ the spectral norm of $X\in\mathcal{S}_m$ and by $\|X\|_{1}=\sum_{i,j=1}^m|x_{ij}|$, since $X\in\mathcal{S}_m$, we know that for a constant $L(m)>0$ it holds \[\|X\|_1\leq L(m)\|X\|_{\mathrm{spec}}.\] This implies, since $v$ is $\X$-convex, the following inequality
\[
|X_iX_jv|\leq L(m)\|(D^2_\X v)^*\|_{\mathrm{spec}}\leq  L(m)\mathrm{Tr}(X_iX_jv)= L(m)\Delta_\X v=L(m)(\Delta_\X u +4Cm).
\]
We conclude by noting that \[|X_iX_jv|=|X_iX_ju+4C\delta_{ij}|\geq |X_iX_ju|.\] \\
By interpolation estimates, see e.g. the proof of Proposition 2.9 in \cite{BBB}, for every $\delta>0$ there exists a constant $c_q>0$ such that
\begin{equation}\label{interp}
\|D_\X u\|_{L^q(B_\frac12)}\leq \delta\|\Delta_\X u\|_{L^q(B_1)}+\frac{c_q}{\delta}\|u\|_{L^q(B_1)}.
\end{equation}
Recalling the definition of the $W^{2,q}_\X$ norm and combining \eqref{point}-\eqref{mixed}-\eqref{interp}, we obtain for a universal constant $K_1=K_1(m,q,\lambda,\Lambda)$ and a constant $\tilde C=\tilde C(\lambda,\Lambda,m,C,q)$, both independent of $u$,
\[
\|u\|_{W^{2,q}_\X(B_\frac12)}\leq K_1(\tilde C+\|u\|_{L^q(B_1)}+\|f\|_{L^q(B_1)}+M).
\]
If $q>Q$ and $u$ is bounded, by the horizontal Morrey embedding, see e.g. \cite{Folland}, we have 
\[
\|u\|_{C^{1,1-\frac{Q}{q}}_\X(B_\frac12)}\leq K_2(\tilde C+\|u\|_{L^\infty(B_1)}+\|f\|_{L^q(B_1)})
\]
for a different universal constant $K_2$ depending on $K_1$ and $C_s$, the latter being the constant of the embedding $W^{2,q}_\X\hookrightarrow C^{1,\alpha}_\X$.
\end{proof}

Some remarks are in order:
\begin{rem}\label{visc}
The estimate in the right-hand side of \eqref{point} can be made rigorous in the weak setting of viscosity solutions by means of the transitivity of the viscosity inequalities. More precisely, adapting Lemma 2.12 in \cite{CC} in the degenerate setting of H\"ormander vector fields, one can prove that if $u\in\mathrm{LSC}(B_1)$ is a viscosity solution to \eqref{eqmain1} with $f\in C(B_1)$, then there exists a constant $K$ depending on $\sup_{B_1}f$, $\lambda,\Lambda, |G(x,u,D_\X u,0)|,m,C$ such that
\begin{equation*}
\sum_{i=1}^mX_{i}X_j \varphi(x)\leq K
\end{equation*}
provided $\varphi\in C^2$ and $u-\varphi$ has a local minimum at $x$.
\end{rem}
\begin{rem}\label{strict}
The estimates in Theorem \ref{main1} can be obtained for the smaller class of $\X$-convex supersolutions, but under the weaker condition of strict (sub)ellipticity
\[
\lambda\mathrm{Tr}(X-Y)+G(x,r,p,Y)\leq G(x,r,p,X)
\]
or the one-side condition
\[
G(x,r,p,X)\geq G(x,r,p,0)+\mathcal{M}^-_{\lambda,\Lambda}(X).
\]
For instance, the linear operator $G(x,X)=\mathrm{Tr}(A(x)X)$ satisfies the first condition when $A\geq\lambda I_m$. In this case one can get $C^{1,1}_\X$ estimates along vector fields, as it is done in Theorems 3 and 4 of \cite{Imbert}. It is enough to prove that $u$ is a solution to the inequality
\[
\lambda \Delta_\X u+G(x,u,D_\X u,0)\leq0\text{ in }\R^n.
\]
To see the validity of $C^{1,1}$ estimates when $u$ is smooth, it is sufficient to exploit the strict ellipticity, together with the fact that $(D^2_\X u)^*\geq0$ holds in the classical sense and that, for $A\geq0$, $A\in\mathcal{S}_m$, we have the inequality $A\leq C(m)\mathrm{Tr}(A)I_m$. The following estimate then follows for $\X$-Lipschitz solutions by exploiting the equation
\[
\|(D^2_\X u)^*\|\leq \frac{C(m)}{\lambda}|G(x,u,D_\X u,0)|.
\]
A similar estimate holds when $u$ is $\X$-semiconvex. 
\end{rem}
\begin{rem}
It is worth noting that one cannot expect the validity of the above a priori estimates when $q<Q$. An example is the following, cf. Example 3.4 in \cite{BFM}: consider in the Heisenberg group $\mathbb{H}^d\simeq \R^{2d+1}$ the function $u(x)=\rho(x)-1$ in $B_1$, $\rho(x)=(|x_H|^4+x_V^2)^\frac14$, $x_H=(x_1,...,x_{2d})$ and $x_V=x_{2d+1}$. Then, $u$ is $\X$-convex for $|x_H|\neq0$ by Theorem 6.6 of \cite{DGNcag} and solves $\Delta_\X u=\frac{Q-1}{\rho}|D_\X\rho|^2=:f(x)$. Then, $f\in L^q(B_1)$ if and only if $q<Q$, but $u$ is not differentiable at points where $(x_H,x_V)=(0,0)$ since for $i=1,...,d$ we have
\[
X_i\rho=x_i\frac{|D_\X\rho|^2}{\rho}+\frac{x_{i+d}x_{2d+1}}{\rho^3},\ \ \ X_{i+d}\rho=x_{i+d}\frac{|D_\X\rho|^2}{\rho}-\frac{x_{i}x_{2d+1}}{\rho^3}.
\]
\end{rem}

\subsection{Convexity and semiconvexity estimates for fully nonlinear equations}\label{sec;conv}
In the theory of elliptic and parabolic equations, geometric one-side second order estimates play a crucial role. To our knowledge there are three methods allowing to prove convexity and semiconvexity properties of solutions for elliptic and parabolic equations in the Euclidean setting. The first one dates back to \cite{K}, where it was developed the so-called concavity maximum principle, later adapted to the weak setting of viscosity solutions \cite{Giga}. Another method, based again on the comparison principle, was introduced in \cite{ALL} and it is based on the fact that the convex envelope of a viscosity supersolution remains a supersolution to the starting equation. The more recent one, recently introduced in \cite{EvansARMA10} and based on integral methods, allows to prove certain concavity-type preserving estimates for nonlinear viscous problems under appropriate assumptions on the nonlinearity. This also applies to fully nonlinear diffusions under appropriate regularity assumptions on the nonlinear terms.\\
The extensions of these properties to a degenerate setting such as those of Carnot groups is by no means immediate. The work \cite{Manfredi} shows by tripling the number of variables \`a la Ishii-Lions a right-invariant convexity property for evolution equations in nondivergence form. The more recent paper \cite{LiuZhou} studies the (left) horizontal convexity preserving property and proved the following result for fully nonlinear subelliptic equations posed on the first Heisenberg group in $\R^3$: if $G(x,r,p,X)$ is proper, concave in all the variables and symmetric with respect to $(x,p)$ (see assumptions (A3) and (A4) in \cite{LiuZhou}), continuous solutions to fully nonlinear subelliptic equations on the Heisenberg group are horizontally convex provided that the comparison principle holds for coercive solutions, cf. \cite[Theorem 5.7]{LiuZhou}. By Lemma 4.1 in \cite{BD} we would conclude that $u$ is $\X$-convex along the fields of the Heisenberg group. Consequently, we have the following result:
\begin{thm}
Suppose that $G$ is uniformly subelliptic, proper, concave in all the variables and symmetric with respect to $x,p$. Then, coercive solutions to $G(x,u,D_\X u,(D^2_\X u)^*)=0$ posed on the first Heisenberg group $\mathbb{H}$ satisfy the a priori estimates of Remark \ref{strict}.
\end{thm}
We remark that convexity and semiconvexity type properties hold for the more restrictive class of solutions and typically require additional conditions on the nonlinearity (such as concavity), as it happens when the equation is driven by the sole (sub-)Laplacian in the Euclidean setting.

\section{Impossibility of $W^{2,q}_\X$ estimates for solutions fully nonlinear subelliptic equations when $q<Q$}\label{sec;count}
In this section we turn to the smaller class of solutions to elliptic equations, without imposing geometric conditions on them. We prove the following
\begin{thm}\label{cexcz}
There exists a fully nonlinear subelliptic operator modeled on the horizontal Hessian of the Heisenberg vector fields, i.e. $G(X):\mathcal{S}_m\to\R$, such that $W^{2,q}_\X$ a priori estimates for the equation $G((D^2_\X u)^*)=f(x)$ fail when $q<Q$.
\end{thm}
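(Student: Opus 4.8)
The plan is to transplant to the Heisenberg group $\He^d\simeq\R^{2d+1}$ the classical Euclidean counterexample of Caffarelli \cite{C91}, replacing the Euclidean modulus by the Kor\'anyi gauge $N$ --- $\delta_\lambda$-homogeneous of degree one, smooth on $\He^d\setminus\{0\}$ and comparable with $d(\cdot,0)$ --- and producing, for each $q\in(1,Q)$, a viscosity solution $u$ of a uniformly subelliptic equation $G((D^2_\X u)^*)=f$ lying in $L^q(B_1)$ together with $f$ but whose symmetrized horizontal Hessian is \emph{not} in $L^q(B_{1/2})$; a single operator $G$ will serve all such $q$.

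\emph{Step 1: the singular solutions.} For $\gamma>1$ I would take the $\delta_\lambda$-homogeneous, sign-changing function $w_\gamma:=x_1x_2\,N^{-\gamma}$, where $x_1,x_2$ are horizontal coordinate functions, so that $\Delta_\X(x_1x_2)=0$ and $(D^2_\X(x_1x_2))^*$ is the \emph{constant} matrix with entries $1$ in positions $(1,2),(2,1)$ and $0$ elsewhere (eigenvalues $+1,-1,0,\dots,0$). Then $w_\gamma\in C^\infty(\He^d\setminus\{0\})$ is $\delta_\lambda$-homogeneous of degree $2-\gamma$, and the product rule together with the homogeneity of $N$ and of its horizontal derivatives yields a factorization $(D^2_\X w_\gamma)^*(x)=N(x)^{-\gamma}\,M_\gamma(\xi(x))$, $\xi(x):=\delta_{1/N(x)}(x)$, with $M_\gamma$ a bounded $\mathcal{S}_m$-valued map on the gauge sphere whose leading term is the constant indefinite matrix above. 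Hence $|(D^2_\X w_\gamma)^*|\simeq N^{-\gamma}$ on a set of full Haar measure, and since $|\{N<\rho\}|\simeq\rho^Q$ one gets $w_\gamma\in L^q(B_1)$ iff $q(\gamma-2)<Q$ and $w_\gamma\in W^{2,q}_\X(B_{1/2})$ iff $q\gamma<Q$. Given $q\in(1,Q)$ one picks $\gamma=\gamma(q)$ with $q\gamma\ge Q$ and $q(\gamma-2)<Q$ (when $q>Q/2$ one can take $\gamma\in(1,2)$, so that $w_\gamma$ is moreover bounded and continuous at the origin); then $w_\gamma\in L^q(B_1)$ while $w_\gamma\notin W^{2,q}_\X(B_{1/2})$. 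Because $w_\gamma$ changes sign on every punctured neighbourhood of $0$ (like $x_1x_2$), no $C^2$ function can touch its graph from either side at $0$, so $w_\gamma$ is \emph{vacuously} a viscosity solution there of any equation.

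\emph{Step 2: the operator.} As the dominant part of $M_\gamma(\xi)$ is indefinite and perfectly balanced, for $\gamma$ in the admissible range the matrices $M_\gamma(\xi)$ --- and, crucially, all their pairwise differences --- stay uniformly indefinite, the ratio of the sum of positive to the sum of negative eigenvalues remaining in a fixed interval. Choosing $\Lambda/\lambda$ large enough, the closed cone $\mathcal{C}:=\{\,sM_{\gamma(q)}(\xi):\ s\ge0,\ q\in(1,Q),\ \xi\ \text{on the gauge sphere}\,\}$ consists of matrices $X$ with $\mathcal{M}^-_{\lambda,\Lambda}(X)\le0\le\mathcal{M}^+_{\lambda,\Lambda}(X)$, and the same holds for every difference of two elements of $\mathcal{C}$; a McShane-type extension preserving the two-sided Pucci bounds (equivalently, a prescription of values in the Isaacs representation recalled in the Introduction) then furnishes a uniformly subelliptic $G:\mathcal{S}_m\to\R$ with $G\equiv0$ on $\mathcal{C}$ and $G(0)=0$. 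Consequently $G((D^2_\X w_\gamma)^*)\equiv0$ on $B_1\setminus\{0\}$, hence on $B_1$ in the viscosity sense by Step 1; thus $w_\gamma$ solves $G((D^2_\X w_\gamma)^*)=f$ with $f\equiv0$, while $w_\gamma,f\in L^q(B_1)$ and $\|w_\gamma\|_{W^{2,q}_\X(B_{1/2})}=+\infty$. This defeats any a priori estimate $\|u\|_{W^{2,q}_\X(B_{1/2})}\lesssim\|u\|_{L^q(B_1)}+\|f\|_{L^q(B_1)}$ for $G((D^2_\X u)^*)=f$, for every $q\in(1,Q)$.

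\emph{Step 3: the main obstacle and the endpoint.} The technical heart is the horizontal Hessian computation on $\He^d$: in contrast with the sub-Laplacian, the full symmetrized horizontal Hessian of $w_\gamma$ sees the commutators $[X_i,X_j]$ and degenerates on the characteristic set $\{x_1=\dots=x_{2d}=0\}$, so one must verify that $M_\gamma$ extends there continuously and stays uniformly indefinite for all the $\gamma$ that are needed, and that this set, having measure zero, does not alter the $L^q$ thresholds. Should uniform indefiniteness of $M_\gamma$ fail for large $\gamma$ --- i.e.\ for $q$ close to $1$, where the homogeneous model $w_\gamma$ is also unbounded at $0$ --- one would instead employ a higher-degree $\delta_\lambda$-homogeneous horizontal harmonic saddle $p(x_1,\dots,x_{2d})$, or, in the range $q\le Q/2$, superpose $\delta_\lambda$-rescaled copies of a fixed \emph{bounded} saddle solution placed at a sequence of disjoint scales (this preserves the equation, by left- and dilation-invariance of $G((D^2_\X\cdot)^*)=0$, and is precisely the Caffarelli mechanism). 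The last delicate point is to make rigorous the ``vacuous viscosity solution at the singular point'' claim; here the sign-changing, non-extremal structure of $w_\gamma$ is essential, since a definite-sign profile would only be a strict sub- or supersolution and could not solve an equation whose operator vanishes on an indefinite cone. Finally, as $q$ ranges over $(1,Q)$ the admissible exponent $\gamma(q)$ ranges between $1$ (as $q\uparrow Q$) and $Q$ (as $q\downarrow1$), which is exactly where the $L^q$-integrability of $(D^2_\X w)^*$ breaks down, so --- as for the phenomenon in \cite{DGN} --- the threshold $q=Q$ is sharp.
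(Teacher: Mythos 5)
Your construction is genuinely different from the paper's, which adapts Caffarelli's \emph{radial} counterexample: the paper takes the gauge-radial barriers $u_{\varepsilon,\alpha}=\psi(\rho)$ with $\psi(\rho)=1-\rho^\alpha$ capped by a paraboloid in $\rho$ on $B_\varepsilon$, uses the explicit eigenvalue decomposition of $(D^2_\X\psi(\rho))^*$ from Lemma \ref{hesH} (Cutr\`i--Tchou), and chooses the operator to be the concrete Pucci operator $\mathcal{M}^+_{\lambda,\Lambda}$ with $\Lambda=\frac{1}{1-\alpha}$, $\lambda=\frac{1}{Q-1}$, so that the equation reduces to a scalar identity in $\rho$: the operator vanishes for $\rho>\varepsilon$ and equals an explicit $f_{\varepsilon,\alpha}$ supported in $B_\varepsilon$ whose $L^q$ norm is controlled while the horizontal Hessian norm blows up. This yields a family of regular functions defeating the estimate, with every step explicit. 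Your route instead builds a single singular homogeneous saddle $w_\gamma=x_1x_2N^{-\gamma}$ and tries to manufacture an operator vanishing on its cone of Hessians.

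The critical gap is Step 2. For a uniformly subelliptic $G$ to satisfy $G\equiv 0$ on the cone $\mathcal{C}=\{(D^2_\X w_\gamma)^*(x):x\neq 0\}=\{sM_\gamma(\xi)\}$, it is \emph{necessary} (by \eqref{uellintro}) that $\mathcal{M}^-_{\lambda,\Lambda}(Y-Y')\le 0\le\mathcal{M}^+_{\lambda,\Lambda}(Y-Y')$ for \emph{every} pair $Y,Y'\in\mathcal{C}$; no McShane extension can circumvent this. You assert "uniform indefiniteness of all pairwise differences" but never verify it, and there is no structural reason for it to hold: taking $Y=sM_\gamma(\xi)$ and $Y'=sM_\gamma(\xi')$ with $\xi\to\xi'$, the condition forces every tangent vector $\partial_v M_\gamma(\xi)$ of the (curved) image surface to be Pucci-indefinite with a ratio controlled by $\Lambda/\lambda$ --- enlarging $\Lambda/\lambda$ only helps if one first shows no difference is sign-definite, which is a nontrivial computation you have not done (and $M_\gamma$ is \emph{not} a perturbation of the constant matrix $E_{12}^{\mathrm{sym}}$: all three terms in the product rule for $(D^2_\X(x_1x_2\,N^{-\gamma}))^*$ have the same homogeneity, so there is no "leading term"). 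If some difference is, say, positive definite, no uniformly elliptic $G$ vanishing on $\mathcal{C}$ exists and the whole construction collapses. The paper avoids this entirely because a gauge-radial function has a Hessian with only three distinct eigenvalues, all multiples of the common factor $|D_\X\rho|^2$, so the image of the Hessian map is essentially a segment of a two-dimensional cone on which $\mathcal{M}^+$ is computed exactly. Secondary issues: (i) your "changes sign, hence no touching test function" justification is insufficient as stated ($x_1x_2$ itself changes sign in every neighbourhood of $0$ and is smooth); what matters is the two-sided sublinear cusp $\pm c|t|^{2-\gamma}$ with $2-\gamma<1$, which confines the clean argument to $\gamma\in(1,2)$, i.e. $q>Q/2$, with the remaining range only gestured at; (ii) a single non-$W^{2,q}_\X$ viscosity solution disproves interior $W^{2,q}_\X$ \emph{regularity}, but to defeat the \emph{a priori estimate} for classical solutions (which is what the theorem, in line with Theorem \ref{main1}, is about) you would still need an approximation by regular solutions with uniformly controlled data --- exactly what the paper's $\varepsilon$-family provides for free.
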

To prove the result we premise the following algebraic lemma, referring for its proof to \cite{CT}.
\begin{lemma}\label{hesH}
Let $\X$ be the Heisenberg vector fields in $\R^{2d+1}$ and $\rho$ be the homogeneous norm defined as
\begin{equation}\label{norm}
\rho(x)=\left(\left(\sum_{i=1}^{2d}x_i^2\right)^2+x_{2d+1}^2\right)^\frac14.
\end{equation}
Then, for $|x_H|\neq0$, being $x_H=(x_1,...,x_{2d})$,
\[
D_{\X}\rho = \frac{\eta}{\rho^3} , \qquad |D_{\X}\rho|^2=\frac{|x_H|^2}{\rho^2}\leq 1\, ,
\]
where $\eta\in\R^{2d}$  
is defined by
\begin{equation*}
\label{eta}
\eta_i:=x_i|x_H|^2+x_{i+d}x_{2d+1}\ , \qquad
\eta_{i+d}:=x_{i+d}|x_H|^2-x_{i}x_{2d+1} .
\end{equation*}
for $i=1,...,d$. In addition
\[
(D^2_{\X}\rho)^*=-\frac{3}{\rho}D_{\X}\rho\otimes D_{\X}\rho+\frac{1}{\rho}|D_{\X}\rho|^2I_{2d}+\frac{2}{\rho^3}\begin{pmatrix}B & C\\-C & B\end{pmatrix}\ ,
\]
where the matrices $B=(b_{ij})\in\R^{d\times d}$ and $C=(c_{ij})\in \R^{d\times d}$ are defined as follows
\[
b_{ij}:=x_ix_j+x_{d+i}x_{d+j}\ ,\qquad c_{ij}:=x_ix_{d+j}-x_jx_{d+i}
\]
for $i,j=1,...,d$ and satisfy $B=B^T$ and $C^T=-C$.
Finally, for a radial function $\psi=\psi(\rho)$ we have
\[
(D^2_{\X}\psi)^*(\rho)=\frac{\psi'(\rho)|D_{\X}\rho|^2}{\rho}I_{2d}+2\frac{\psi'(\rho)}{\rho^3}\begin{pmatrix}B & C\\-C & B\end{pmatrix}+\left(\psi''(\rho)-3\frac{\psi'(\rho)}{\rho}\right)D_{\X}\rho\otimes D_{\X}\rho\ ,
\]
and its eigenvalues are $\psi''(\rho)|D_{\X}\rho|^2, 3\psi'(\rho)\frac{|D_{\X}\rho|^2}{\rho}$, which are simple, and $\psi'(\rho)\frac{|D_{\X}\rho|^2}{\rho}$ with multiplicity $2d-2$.
\end{lemma}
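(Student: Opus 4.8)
The plan is to reduce every assertion to elementary first-order differentiations under the Heisenberg frame, the only nontrivial rules being $X_a x_c=\delta_{ac}$ for $c\le 2d$ together with $X_i x_{2d+1}=2x_{i+d}$ and $X_{i+d}x_{2d+1}=-2x_i$ (so that $X_a|x_H|^2=2x_a$ for $a=1,\dots,2d$). First I would write $\rho=g^{1/4}$ with $g=|x_H|^4+x_{2d+1}^2$, so that $X_a\rho=\tfrac14 g^{-3/4}X_ag=\tfrac{1}{4\rho^3}X_ag$; a direct evaluation gives $X_ig=4x_i|x_H|^2+4x_{i+d}x_{2d+1}=4\eta_i$ and $X_{i+d}g=4x_{i+d}|x_H|^2-4x_ix_{2d+1}=4\eta_{i+d}$, whence $D_\X\rho=\eta/\rho^3$. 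For the length I would expand $\eta_i^2+\eta_{i+d}^2$ and use the cancellation of the mixed terms to obtain $(x_i^2+x_{i+d}^2)(|x_H|^4+x_{2d+1}^2)=(x_i^2+x_{i+d}^2)\rho^4$; summing over $i$ yields $|\eta|^2=|x_H|^2\rho^4$, hence $|D_\X\rho|^2=|x_H|^2/\rho^2\le 1$ because $|x_H|^4\le\rho^4$.

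For the Hessian I would differentiate $p_b:=X_b\rho=\eta_b/\rho^3$, getting $X_aX_b\rho=\rho^{-3}X_a\eta_b-3\rho^{-4}\eta_bX_a\rho$, where the last term equals $\tfrac{3}{\rho}p_ap_b$ and supplies the $-\tfrac{3}{\rho}D_\X\rho\otimes D_\X\rho$ contribution. The bulk of the computation is the tensor $X_a\eta_b$: the two diagonal blocks give $X_j\eta_k=|x_H|^2\delta_{jk}+2b_{jk}$ and $X_{j+d}\eta_{k+d}=|x_H|^2\delta_{jk}+2b_{jk}$, while the cross blocks give $X_j\eta_{k+d}=2c_{jk}-\delta_{jk}x_{2d+1}$ and $X_{k+d}\eta_j=2c_{jk}+\delta_{jk}x_{2d+1}$. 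Here lies the one genuinely subelliptic point: the raw Hessian $X_aX_b\rho$ is \emph{not} symmetric, its defect being the commutator $[X_j,X_{k+d}]\rho=-4\partial_{x_{2d+1}}\rho$, which is exactly the antisymmetric $\pm\delta_{jk}x_{2d+1}$ part. Passing to the symmetrized Hessian $(D^2_\X\rho)^*$ annihilates it, leaving $\tfrac12(X_a\eta_b+X_b\eta_a)=|x_H|^2\delta_{ab}+2M_{ab}$ with $M=\begin{pmatrix}B&C\\-C&B\end{pmatrix}$; since $|x_H|^2/\rho^3=\tfrac{1}{\rho}|D_\X\rho|^2$, collecting the terms reproduces the stated (symmetric) expression for $D^2_\X\rho$.

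The radial formula is then a chain rule: from $X_a\psi=\psi'(\rho)p_a$ one gets $X_aX_b\psi=\psi''(\rho)p_ap_b+\psi'(\rho)X_aX_b\rho$, so after symmetrization $(D^2_\X\psi)^*=\psi''\,p\otimes p+\psi'(D^2_\X\rho)^*$, which rearranges into the announced expression with coefficients $\tfrac{\psi'|D_\X\rho|^2}{\rho}$, $\tfrac{2\psi'}{\rho^3}$ and $\psi''-\tfrac{3\psi'}{\rho}$. For the eigenvalues I would pass to the complex picture: identifying $\R^{2d}\cong\mathbb{C}^d$ by $w_i=x_i+\mathrm{i}\,x_{i+d}$ one checks $b_{ij}-\mathrm{i}\,c_{ij}=w_i\bar w_j$, so the complex form of $M$ is the Hermitian rank-one matrix $ww^*$, whose nonzero eigenvalue $\|w\|^2=|x_H|^2$ is carried with real multiplicity $2$ and $0$ with multiplicity $2d-2$. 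A short computation gives $\eta_i+\mathrm{i}\,\eta_{i+d}=(|x_H|^2-\mathrm{i}\,x_{2d+1})w_i$, so $p$ lies in the two-dimensional $|x_H|^2$-eigenspace $W$ of $M$. Splitting $\R^{2d}=W\oplus W^\perp$, on $W^\perp$ both $M$ and $p\otimes p$ vanish and the eigenvalue is $\tfrac{\psi'|D_\X\rho|^2}{\rho}$ with multiplicity $2d-2$; on $W$ one has $M=|x_H|^2\mathrm{Id}$, and diagonalizing the rank-one $p\otimes p$ there gives $3\psi'\tfrac{|D_\X\rho|^2}{\rho}$ orthogonally to $p$ and $\psi''|D_\X\rho|^2$ along $p$, using $|p|^2=|D_\X\rho|^2$ and $|x_H|^2/\rho^3=|D_\X\rho|^2/\rho$.

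The hard part is the middle step: keeping track of the entries of $X_a\eta_b$ and noticing that the non-commutativity of the frame forces a genuine antisymmetric term, so that the clean block matrix $M$ emerges only after symmetrization. The remaining ingredients are either bookkeeping with the first-order identities or the rank-one/complex-structure linear algebra in the last step.
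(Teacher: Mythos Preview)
Your argument is correct and complete. The paper does not actually prove this lemma: it states the result and refers the reader to \cite{CT} for the computation, so there is no ``paper's own proof'' to compare with beyond that citation. Your approach---writing $\rho=g^{1/4}$, differentiating along the frame, tabulating the entries $X_a\eta_b$, symmetrizing to kill the commutator defect $\pm\delta_{jk}x_{2d+1}$, and then diagonalizing via the complex identification $w_i=x_i+\mathrm{i}x_{i+d}$ so that $M$ becomes the rank-one Hermitian matrix $ww^*$---is exactly the kind of direct verification one would expect, and every step checks out. Two tiny cosmetic points: the commutator identity should read $[X_j,X_{k+d}]=-4\delta_{jk}\partial_{x_{2d+1}}$ (you dropped the $\delta_{jk}$), and when you wrote ``the last term equals $\tfrac{3}{\rho}p_ap_b$'' you are referring to $3\rho^{-4}\eta_bX_a\rho$ without its minus sign, which is fine but could be phrased more carefully. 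Neither affects the argument.
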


\begin{proof}[Proof of Theorem \ref{cexcz}]
This is inspired from the work by L. Caffarelli \cite{C91}. Consider, for $q<Q$, $\varepsilon>0$, $0<\alpha<1$, the family of functions $u_{\varepsilon,\alpha}(x)=\psi(\rho(x))$ defined as 
\[
u_{\varepsilon,\alpha}(x)=
\begin{cases}
1-(\rho(x))^\alpha&\text{ for }\rho>\varepsilon\\
1-\alpha\varepsilon^{\alpha-2}(\rho(x))^2-(1-\alpha)\varepsilon^\alpha&\text{ for }\rho\leq\varepsilon,
\end{cases}
\]
$\rho$ being the homogeneous norm of the Heisenberg group defined in \eqref{norm}. Then, we have
\[
\psi_{\rho\rho}=
\begin{cases}
\alpha(1-\alpha)\rho^{\alpha-2}&\text{ for }\rho>\varepsilon\\
-2\alpha\varepsilon^{\alpha-2}&\text{ for }\rho<\varepsilon.
\end{cases}
\]
and
\[
\frac{1}{\rho}\psi_{\rho}=
\begin{cases}
-\alpha\rho^{\alpha-2}&\text{ for }\rho>\varepsilon\\
-2\alpha\varepsilon^{\alpha-2}&\text{ for }\rho<\varepsilon.
\end{cases}
\]
Then, define, for $\alpha<1$ fixed, the constants $\Lambda=\frac{1}{1-\alpha}$ and $\lambda=\frac{1}{Q-1}$ (note that $\Lambda>\lambda$) and consider the fully nonlinear operator $\mathcal{M}^+_{\lambda,\Lambda}(X)$. Note also that it is a convex degenerate operator. We apply it to the symmetrized horizontal Hessian of the family $u_{\varepsilon,\alpha}$, i.e. $X=(D^2_\X u_{\varepsilon,\alpha})^*$.  Then, by Lemma \ref{hesH}, the eigenvalues of $(D^2_\X u_{\varepsilon,\alpha})^*$ (where $u_{\varepsilon,\alpha}$ is seen as a radial function of $\rho$) are $|D_\X \rho|^2\psi_{\rho\rho}$, $3|D_\X \rho|^2\frac{1}{\rho}\psi_{\rho}$ which are simple, and $|D_\X \rho|^2\frac{1}{\rho}\psi_{\rho}$ with multiplicity $2d-2$. Therefore, being $B_\varepsilon$ the homogeneous metric ball of radius $\varepsilon$, we have for $\rho<\varepsilon$ and $|x_H|\neq0$
\[
\mathcal{M}^+_{\lambda,\Lambda}((D^2_\X u_{\varepsilon,\alpha})^*)=\lambda|D_\X\rho|^2(-2\alpha\varepsilon^{\alpha-2}-2\alpha\varepsilon^{\alpha-2}(Q-1))=-C_1(\alpha,Q)|D_\X\rho|^2\varepsilon^{\alpha-2}\chi_{B_\varepsilon}=:f_{\varepsilon,\alpha}(x),
\]
where $C_1(\alpha,Q)=\frac{2\alpha Q}{Q-1}$ and $\chi_{B_\varepsilon}$ is the indicator function of $B_\varepsilon$. Instead, when $\rho>\varepsilon$ we conclude
\[
\mathcal{M}^+_{\lambda,\Lambda}((D^2_\X u_{\varepsilon,\alpha})^*)=\Lambda|D_\X\rho|^2\alpha(1-\alpha)\rho^{\alpha-2}-\lambda(Q-1)\alpha|D_\X\rho|^2\rho^{\alpha-2}=0.
\]
When $|x_H|=0$ we have $\mathcal{M}^+_{\lambda,\Lambda}((D^2_\X u_{\varepsilon,\alpha})^*)=0$. Therefore, using that $|D_\X\rho|\leq1$ and applying Proposition 5.4.4 of \cite{BLU} (denoting by $\omega_q=|B_1|$), it follows that for $\alpha<1$ fixed
\begin{align*}
\|f_{\varepsilon,\alpha}\|_{L^q}^q&=\int_{B_\varepsilon}C_1^q(\alpha,Q)|D_\X\rho|^{2q}\varepsilon^{(\alpha-2)q}\,dx\\
&\leq C_1^q(\alpha,Q)\varepsilon^{(\alpha-2)q}\int_{B_\varepsilon}\,dx= C_1^q(\alpha,Q)\varepsilon^{(\alpha-2)q}Q\omega_q\int_0^\varepsilon s^{Q-1}\,ds\\
&=C_2(\alpha,q,Q)\varepsilon^{(\alpha-2)q+Q}\to0\text{ as }\varepsilon\to0
\end{align*}
for any
\[
q<\frac{Q}{2-\alpha}.
\]
Therefore, for $|x_H|\neq0$, $\|f_{\varepsilon,\alpha}\|_{q}\leq\overline{K}$ remains bounded for all $\varepsilon\in(0,1]$, whereas for such value of $q$ the norm $\|u_{\eps,\alpha}\|_{W^{2,q}_\X(\R^n)}$ blows-up because of the definition of $\psi_{\rho\rho}$ for $\rho>\eps$, showing that second order estimates in $L^q$ cannot be achieved in the range $q<Q$.
\end{proof}

\begin{rem}
The same counterexample can be built on H-type groups using the computations of $(D^2_\X \rho)^*$ carried out in \cite{Tralli}. 
\end{rem}

\end{document}